\def\NZQ{\Bbb}               
\def\ZZ{{\NZQ Z}}
\def\frk{\frak}               
\def\mm{{\frk m}}
\def\Phi{{\frk n}}
\def\Phi{{\frk N}}
\def\opn#1#2{\def#1{\operatorname{#2}}} 
\opn\chara{char} \opn\length{\ell} \opn\pd{pd} \opn\rk{rk}
\opn\projdim{proj\,dim} \opn\injdim{inj\,dim} \opn\rank{rank}
\opn\depth{depth} \opn\sdepth{sdepth} \opn\fdepth{fdepth}
\opn\grade{grade} \opn\height{height} \opn\embdim{emb\,dim}
\opn\codim{codim}  \opn\min{min} \opn\max{max}
\opn\Tr{Tr} \opn\bigrank{big\,rank}
\opn\superheight{superheight}\opn\lcm{lcm}
\opn\trdeg{tr\,deg}
\opn\reg{reg} \opn\lreg{lreg} \opn\ini{in} \opn\lpd{lpd}
\opn\size{size}
\opn\div{div} \opn\Div{Div} \opn\cl{cl} \opn\Cl{Cl}
\opn\Spec{Spec} \opn\Supp{Supp} \opn\supp{supp} \opn\Sing{Sing}
\opn\Ass{Ass} \opn\Min{Min}
\opn\Ann{Ann} \opn\Rad{Rad} \opn\Soc{Soc}
\opn\Im{Im} \opn\Ker{Ker} \opn\Coker{Coker} \opn\Am{Am}
\opn\Hom{Hom} \opn\Tor{Tor} \opn\Ext{Ext} \opn\End{End}
\opn\Aut{Aut} \opn\id{id}  \opn\deg{deg}
\opn\nat{nat}
\opn\pff{pf}
\opn\Pf{Pf} \opn\GL{GL} \opn\SL{SL} \opn\mod{mod} \opn\ord{ord}
\opn\Gin{Gin} \opn\Hilb{Hilb}
\opn\aff{aff} \opn\con{conv} \opn\relint{relint} \opn\st{st}
\opn\lk{lk} \opn\cn{cn} \opn\core{core} \opn\vol{vol}
\opn\link{link} \opn\star{star}
\opn\gr{gr}
\def\pot#1#2{#1[\kern-0.28ex[#2]\kern-0.28ex]}
\opn\dirlim{\underrightarrow{\lim}}
\opn\inivlim{\underleftarrow{\lim}}
\let\to=\rightarrow
\def\Implies{\ifmmode\Longrightarrow \else
        \unskip${}\Longrightarrow{}$\ignorespaces\fi}
\def\implies{\ifmmode\Rightarrow \else
        \unskip${}\Rightarrow{}$\ignorespaces\fi}
\def\iff{\ifmmode\Longleftrightarrow \else
        \unskip${}\Longleftrightarrow{}$\ignorespaces\fi}
\newtheorem{Theorem}{Theorem}[]
\newtheorem{Lemma}[Theorem]{Lemma}
\let\epsilon\varepsilon
\let\phi=\varphi
\let\kappa=\varkappa
\def\qed{\ifhmode\textqed\fi
      \ifmmode\ifinner\quad\qedsymbol\else\dispqed\fi\fi}
\def\textqed{\unskip\nobreak\penalty50
       \hskip2em\hbox{}\nobreak\hfil\qedsymbol
       \parfillskip=0pt \finalhyphendemerits=0}
\def\dispqed{\rlap{\qquad\qedsymbol}}
\opn\dis{dis}
\def\pnt{{\raise0.5mm\hbox{\large\bf.}}}
\opn\Lex{Lex}
\begin{document}
\title{  Construction of Neron Desingularization for Two Dimensional Rings}

\author{ Gerhard Pfister and Dorin Popescu }
\thanks{}

\address{Gerhard Pfister,  Department of Mathematics, University of Kaiserslautern, Erwin-Schr\"odinger-Str., 67663 Kaiserslautern, Germany}
\email{pfister@mathematik.uni-kl.de}

\address{Dorin Popescu, Simion Stoilow Institute of Mathematics of the Romanian Academy, Research unit 5,
University of Bucharest, P.O.Box 1-764, Bucharest 014700, Romania}
\email{dorin.popescu@imar.ro}

\begin{abstract} An algorithmic proof of the General Neron Desingularization theorem is given for $2$-dimensional local rings and morphisms with small singular locus.

 \noindent
  {\it Key words } : Smooth morphisms,  regular morphisms\\
 {\it 2010 Mathematics Subject Classification: Primary 13B40, Secondary 14B25,13H05,13J15.}
\end{abstract}

\maketitle

\vskip 0.5 cm

\section{Introduction}

The General Neron Desingularization Theorem, first proved by the second author has many important applications. One application is the generalization of Artin's famous approximation theorem  (Artin \cite{A}, Popescu \cite{P}, \cite{P1}).

\begin{Theorem} (General Neron Desingularization,  Andr\'e \cite{An}, Popescu \cite{P0}, \cite{P}, \cite{P1}, Swan \cite{S})\label{gnd}  Let $u:A\to A'$ be a  regular morphism of Noetherian rings and $B$ an  $A$-algebra of finite type. Then  any $A$-morphism $v:B\to A'$   factors through a smooth $A$-algebra $C$, that is $v$ is a composite $A$-morphism $B\to C\to A'$.
\end{Theorem}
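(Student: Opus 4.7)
The plan is to follow the classical Popescu strategy, which the present paper will presumably make algorithmic under its dimension and singular-locus hypotheses. After the standard reduction to $B=\poly{A}{X}{n}/I$ with $I$ finitely generated, the $A$-morphism $v\:B\to A'$ is encoded by a tuple $\yb\in (A')^n$ annihilating a fixed finite generating set of $I$, and what we must construct is a finitely generated smooth $A$-algebra $C$ with $A$-maps $B\to C\to A'$ composing to $v$.

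The central technical object is the Elkik/Jacobian ideal $H_{B/A}\subset B$: for each subtuple $\underline f=(f_1,\dots,f_r)$ from a generating set of $I$ and each $r\times r$ minor $M$ of its Jacobian, the product $M\cdot((\underline f):I)$ lies in $H_{B/A}$, and the non-vanishing locus of $H_{B/A}$ is precisely the $A$-smooth locus of $\Spec B$. Elkik's smoothening lemma then says that once $v(H_{B/A})A'$ is ``large enough'' — say, contains a power of the ideal defining the singular locus of $B\tensor_A A'$ over $A'$ — one can build $C$ by a finite, essentially mechanical sequence of operations: adjoining auxiliary variables, adding the relations coming from Cramer's rule applied to the chosen minor, and killing the nilpotents that appear. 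So the whole difficulty is concentrated in the problem of enlarging $v(H_{B/A})$.

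This enlargement is the hard step, and is where the regularity of $u\:A\to A'$ enters. My strategy would be Noetherian induction, where at each stage one uses the geometric regularity of the fibers of $u$ to lift standard smooth subalgebras of $(B\tensor_A A'_{\qq})^{\wedge}$, for $\qq$ in the singular locus, back to honest finitely presented $A$-algebras $B'$ with $B\to B'\to A'$ extending $v$ and with $v(H_{B'/A})$ strictly containing $v(H_{B/A})$. This lifting uses Cohen structure theorems on the fibers and the Andr\'e--Quillen vanishing characterization of regularity. Iterating and invoking that $A'$ is Noetherian brings us into Elkik's regime, at which point $C$ is constructed as above. The genuine obstacle — the reason the general theorem needed three independent long proofs by Andr\'e, Popescu, and Swan — is precisely this lifting step; for the two-dimensional case of the present paper, one expects the low dimension together with the ``small singular locus'' hypothesis to force only a constant, algorithmically controllable number of such lifting/smoothening cycles, which is what should make the argument constructive.
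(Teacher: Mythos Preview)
The paper does not prove Theorem~\ref{gnd} at all: it is stated in the Introduction as a known result, with attribution to Andr\'e, Popescu, and Swan and pointers to the literature, and is followed immediately by the remark that ``the proof of this theorem is not constructive.'' The paper's actual contribution is Theorem~\ref{m}, a constructive version under strong additional hypotheses (dimension~$2$, Cohen--Macaulay local rings, Henselian target, $v(H_{B/A})A'$ already $\mm A'$-primary), and all of the proof work in the paper goes into that statement, not this one. So there is no proof here for your proposal to be compared against.

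As a sketch of the classical argument your outline is broadly faithful to the Popescu/Swan strategy, but it remains only a sketch: you correctly identify the Noetherian-induction lifting step as the heart of the matter and then defer it entirely, so what you have written is a roadmap rather than a proof. Your closing prediction about the paper's method is also off. The proof of Theorem~\ref{m} does not run even a short Noetherian induction or invoke Andr\'e--Quillen vanishing; the hypothesis that $v(H_{B/A})A'$ is already $\mm A'$-primary lets one pick a regular sequence $\gamma,\gamma'$ in it from the start, after which the smooth algebra is built in a single explicit pass via adjoint-matrix identities and Taylor expansion modulo powers of $\gamma,\gamma'$ (Lemma~\ref{D} and the construction following it). The ``small singular locus'' assumption thus eliminates the inductive enlargement of $H_{B/A}$ altogether rather than merely bounding the number of cycles.
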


The proof of this theorem is not constructive. Constructive proofs for one-dimensional  rings were given in A.\ Popescu, D.\ Popescu \cite{AP}, and Pfister, Popescu \cite{PP}. In this paper we will treat the $2$-dimensional case.

\section{Constructive General Neron Desingularization in a special case}

 Let $u:A\to A'$ be a flat morphism of Noetherian Cohen-Macaulay local rings of dimension $2$. Suppose that  the maximal ideal $\mm$ of $A$ generates the maximal ideal of $A'$ and the completions of $A,A'$ are isomorphic. Moreover suppose that $A'$ is Henselian,  and $u$ is a regular morphism.

  Let $B=A[Y]/I$, $Y=(Y_1,\ldots,Y_n)$. If $f=(f_1,\ldots,f_r)$, $r\leq n$ is a system of polynomials from $I$ then we can define the ideal $\Delta_f$ generated by all $r\times r$-minors of the Jacobian matrix $(\partial f_i/\partial Y_j)$.   After Elkik \cite{El} let $H_{B/A}$ be the radical of the ideal $\sum_f ((f):I)\Delta_fB$, where the sum is taken over all systems of polynomials $f$ from $I$ with $r\leq n$.
Then $B_P$, $P\in \Spec B$ is essentially smooth over $A$ if and only if $P\not \supset H_{B/A}$ by the Jacobian criterion for smoothness.
   Thus  $H_{B/A}$ measures the non smooth locus of $B$ over $A$.
  $B$ is {\em standard smooth} over $A$ if  there exists  $f$ in $I$ as above such that $B= ((f):I)\Delta_fB$.

  The aim of this paper is to give an easy algorithmic  proof of the following theorem.
\vskip 0.3 cm
 \begin{Theorem} \label{m} Any $A$-morphism $v:B\to A'$  such that $v(H_{B/A})A'$ is $\mm A'$-primary factors through a standard smooth $A$-algebra $B'$.
 \end{Theorem}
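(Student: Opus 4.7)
The plan is to exploit the strong hypothesis that $v(H_{B/A})A'$ is $\mm A'$-primary in two complementary ways: to extract a single Elkik witness in $B$ that controls the whole non-smooth locus, and to carry out an explicit polynomial change of variables modelled on the classical N\'eron trick. Because $A'$ is Cohen--Macaulay of dimension~$2$, the $\mm A'$-primary ideal $v(H_{B/A})A'$ contains an $A'$-regular sequence of length two; by the definition of $H_{B/A}$, representatives in $B$ arise (after raising to a power) as sums of elements $p_iM_i$, where $f^{(i)}\subset I$ is a polynomial system of size $r_i\leq n$, $M_i$ is an $r_i\times r_i$ minor of the Jacobian of $f^{(i)}$, and $p_i\in((f^{(i)}):I)$. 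By concatenating the two systems in disjoint variable blocks and multiplying the minors and ideal-quotient witnesses, I package everything into a single triple $(f,M,p)$ and a single element $d:=pM\in B$ whose image $(v(d))$ is $\mm A'$-primary in $A'$.

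Next I use the hypothesis $\widehat A\iso\widehat{A'}$: the image of $A$ in $\widehat{A'}$ is $\mm$-adically dense, so for any prescribed $N\gg 0$ I can choose $\tilde y\in A^n$ with $v(Y)-\tilde y\in\mm^N (A')^n$. Because $f(v(Y))=0$ and some power of $\mm A'$ sits inside $(v(d))$, taking $N$ sufficiently large yields $f(\tilde y)\in v(d)^2 (A')^r$; a further approximation combined with flatness of $A\to A'$ produces $s\in A^r$ with the identity $f(\tilde y)=d(\tilde y)^2\cdot s$ holding already in $A^r$. Now I introduce fresh indeterminates $T=(T_1,\dots,T_n)$ and define an $r$-tuple $F=(F_1,\dots,F_r)\in A[T]^r$ of the shape
\[
 F_j(T)\ =\ d(\tilde y)^{-2}\bigl(f_j(\tilde y+d(\tilde y)\,T)-f_j(\tilde y)\bigr)\ +\ s_j\ +\ (\text{linear correction involving }p),
\]
designed so that each $F_j$ is genuinely polynomial in $T$ over $A$ (the quadratic and higher Taylor terms of $f$ supply the required $d^2$ denominator), the $r\times r$ Jacobian minor of $F$ on an appropriate column set equals $M(\tilde y)$ up to a unit, and $p(\tilde y)$ lies in the ideal quotient of $(F)$ in $A[T]$. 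The candidate standard-smooth algebra is $B':=A[T]/(F)$, and the factorization $B\to B'\to A'$ is given by $Y_j\mapsto \tilde y_j+d(\tilde y)\,T_j$ on the first arrow and $T_j\mapsto t'_j$ on the second, where $t'_j\in A'$ is the unique Henselian lift of $(v(Y_j)-\tilde y_j)/d(\tilde y)$.

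The main obstacle is proving standard smoothness of $B'$ as a global identity $B'=((F):J_F)\,\Delta_F\,B'$, not merely essential smoothness at the image of $v$. This is harder than in the one-dimensional case of \cite{PP}, because a single nonzerodivisor no longer controls the situation: the regular $A'$-sequence extracted in Step~1 must be fully absorbed into the product $d=pM$, and the cofactor-matrix manipulations that produce $F$ must be tracked carefully so that the Elkik witness $p$ for $B$ survives the substitution $Y\mapsto\tilde y+d(\tilde y)T$ and reappears inside $((F):J_F)$, while $\Delta_F$ is literally $M(\tilde y)^{\ell}$ times a unit for some $\ell$. Once this identity is in place, the Henselian lifting of $T\to t'$ is automatic from the Hensel hypothesis $F(0)\in\mm^{N-O(1)}A'$ together with the Jacobian minor being a unit at $0$, and the composition gives the required factorization.
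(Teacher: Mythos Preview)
Your proposal has a fatal gap right at the packaging step. You claim that by concatenating the two witness systems and multiplying the minors and ideal-quotient elements you obtain a \emph{single} $d=pM\in B$ with $(v(d))$ an $\mm A'$-primary ideal. But $A'$ is Cohen--Macaulay local of dimension~$2$, so by Krull's principal ideal theorem any principal ideal has height at most~$1$; hence $(v(d))$ can never be $\mm A'$-primary. The sentence ``some power of $\mm A'$ sits inside $(v(d))$'' is therefore false, and with it the estimate $f(\tilde y)\in v(d)^2(A')^r$ collapses: no single Elkik witness can control approximation in two independent directions. This is precisely what distinguishes the $2$-dimensional case from the $1$-dimensional argument of \cite{PP} that you are trying to imitate.

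The paper's proof is organised around this obstruction. One keeps \emph{two} elements $d,d'$ (powers of a regular sequence $\gamma,\gamma'$ in $A$) attached to two different systems $f,f'$, and performs the N\'eron substitution twice, nested. First one works modulo $d^3$: since $d'$ is regular on $A/(d^3)$, the one-variable trick with $d'$ produces a standard smooth $A$-algebra $D$ through which $v$ factors \emph{modulo} $d^3$; the Henselian hypothesis lifts this to an actual map $\omega:D\to A'$ with $v(Y)-\omega(Y')\in d^3(A')^n$. Only then, over $D$ (where $d$ is now a nonzerodivisor because $D$ is smooth over $A$), does a second N\'eron substitution with $d$ yield the final $B'$. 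Your single-stage construction cannot substitute for this: even granting your $F$, the Jacobian minor you produce is $M(\tilde y)$ times a unit, and $M(\tilde y)\in\mm$, so $\Delta_F B'$ is a proper ideal and you never get the global identity $B'=((F):J_F)\Delta_F B'$ required for standard smoothness. You flag this as ``the main obstacle'' but offer no mechanism to overcome it; the paper's mechanism is exactly the two-step bootstrap using the second regular element.
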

 
 \begin{proof}
 We choose   $\gamma, \gamma'\in v(H_{B/A})A'\cap A$ such that $\gamma, \gamma'$ is a regular sequence in $A$, let us say  $\gamma=\sum_{i=1}^qv(b_i)z_i$, $\gamma'=\sum_{i=1}^qv(b_i)z'_i$ for some 
\footnote{For the algorithm we have to choose $\gamma,\gamma'$ more carefully:  $\gamma\equiv \sum_{i=1}^qb_i(y')z_i$ modulo $(\gamma^t,\gamma'^{t})$,  $\gamma'\equiv \sum_{i=1}^qb_i(y')z'_i$ modulo $(\gamma^t,\gamma'^{t})$ with $z_i, z'_i\in A$, and $y'_i\equiv v(Y_i)$ modulo $\mm^N$ in $A$ , $N>>0$.}
 $b_i\in H_{B/A}$ and $z_i, z'_i\in A'$. Set $B_0=B[Z,Z']/(f,{\tilde f}) $, where $ f=-\gamma+\sum_{i=1}^qb_iZ_i\in B[Z]$, $Z=(Z_1,\ldots,Z_q)$,  ${\tilde f}=-\gamma'+\sum_{i=1}^qb_iZ'_i\in B[Z']$, $Z'=(Z'_1,\ldots,Z'_q)$
and let $v_0:B_0\to A'$ be the map of $B$-algebras given by $Z\to z$, $Z'\to z'$. 
 Changing $B$ by $B_0$ we may suppose that $\gamma, \gamma'\in H_{B/A}$.

We need the following  lemmas.

 \begin{Lemma} \label{l}
 \begin{enumerate}
 \item{}(\cite[Lemma 3.4]{P0})  Let $B_1$ be the symmetric algebra $S_B(I/I^2)$ of $I/I^2$ over\footnote{Let $M$ b e a finitely represented $B$-module and $B^m\xrightarrow{(a_{ij})} B^n\to M\to 0$ a presentation then $S_B(M)=B[T_1, \ldots, T_n]/J$ with $J=(\{\sum\limits^n_{i=1} a_{ij} T_i\}_ {j=1, \ldots, m})$.} $B$. Then $H_{B/A}B_1\subset H_{B_1/A}$ and  $(\Omega_{B_1/A})_{\gamma}$ is free over $(B_1)_{\gamma}$ for any $\gamma\in H_{B/A}$.

\item{} (\cite[Proposition 4.6]{S})   Suppose that  $(\Omega_{B/A})_{\gamma}$ is free over $B_{\gamma}$. Let $I'=(I,Y')\subset A[Y,Y']$, $Y'=(Y'_1,\ldots,Y'_n)$. Then $(I'/I'^2)_{\gamma}$ is free over $B_{\gamma}$.

\item{} (\cite[Corollary 5.10]{P1}) Suppose that $(I/I^2)_{\gamma}$ is free over  $B_{\gamma}$. Then a power of $\gamma$ is in  $ ((g):I)\Delta _g$ for some $g=(g_1,\ldots g_r)$, $r\leq n$ in $I$.
\end{enumerate}
\hfill\ \end{Lemma}

Using (1) of Lemma \ref{l}  we can reduce the proof   to the case when $\Omega_{B_{\gamma}/A}$ and  $\Omega_{B_{\gamma'}/A}$ are free over $ B_{\gamma}$ respectively $ B_{\gamma'}$.
Let $B_1$ be given by (1) of Lemma \ref{l}. The inclusion $B\subset B_1$ has a retraction $w$ which maps $I/I^2$ to zero. For the reduction we change $B,v$ by $B_1,vw$.

Using (2) from Lemma \ref{l} we may reduce the proof   to the case when  $(I/I^2)_{\gamma}$ (resp.  $(I/I^2)_{\gamma'}$) is free over $ B_{\gamma}$  (resp. $ B_{\gamma'}$). Indeed,
since $\Omega_{B_{\gamma}/A}$ is free over $ B_{\gamma}$ we see  that changing $I$ with $(I,Y')\subset A[Y,Y']$ we may suppose that $(I/I^2)_{\gamma}$ is free over $ B_{\gamma}$. Similarly, for $\gamma'$. 

Using (3) from Lemma \ref{l} we may reduce the proof   to the case when a power  of $\gamma$ (resp. $\gamma'$) is in  $ ((f):I)\Delta _f$ (resp.  $ ((f'):I)\Delta _{f'}$) for some $f=(f_1,\ldots f_r)$, $r\leq n$ and  $f'=(f'_1,\ldots f'_{r'})$, $r'\leq n$  from $I$.

We may now assume that a power $d$ (resp. $d'$) of $\gamma$ (resp. $\gamma'$) has the form $d\equiv P= \sum_{i=1}^qM_iL_i\ \mbox{modulo}\ I$,
$d'\equiv P'= \sum_{i=1}^{q'}M'_iL'_i\ \mbox{modulo}\ I$  
for some $r\times r$ (resp. $r'\times r'$)  minors $M_i$ (resp. $M'_i$)  
of $(\partial f/\partial Y) $ (resp. $(\partial f'/\partial Y) $) and $L_i\in ((f):I)$
(resp. $L'_i\in ((f'):I)$).

 The Jacobian matrix $(\partial f/\partial Y)$ (resp. $(\partial f/\partial Y)$) can be completed with $(n-r)$ (resp. $(n-r')$) rows from $A^n$ obtaining a square $n$ matrix $H_i$ (resp. $H'_i$) such that $\det H_i=M_i$ (resp.  $\det H'_i=M'_i$).
This is easy using just the integers $0,1$.

Let ${\bar A}=A/(d^3)$, ${\bar B}={\bar A}\otimes_AB$, ${\bar A}'=A'/(d^3A')$, ${\bar v}={\bar A}\otimes_Av$.

 We will now construct  a standard smooth $A$-algebra $D$ and an $A$-morphism $\omega:D\to A'$ such that $y=v(Y)\in \Im \omega +d^3A'$.

\begin{Lemma}\label{D} There exists a standard smooth $A$-algebra $D$ such that $\bar v$ factors through $\bar D=\bar A\otimes_A D$.
\end{Lemma}
\begin{proof}
Let $y'\in A^n$ be such that  $y=v(Y)\equiv y'$ modulo $(d^3,d'^3)A'$, let us say $y- y'\equiv\ d'^2\epsilon \ \mbox{modulo} \ d^3$ for $\epsilon\in d'A'^n$. Thus $I(y')\equiv 0$ modulo $(d^3,d'^3)A'$. 

Recall  that we have $d'\equiv P'$ modulo $I$ and so $P'(y')\equiv d'$ modulo $(d^3,d'^3)$ in $A$. Thus $P'(y')\equiv d's\ \mbox{modulo}\ d^3 $ for a certain $s\in A$ with $s\equiv 1$ modulo $d'$.

 Let $G'_i$ be the adjoint matrix of $H'_i$ and $G_i=L_iG'_i$. We have
$G_iH'_i=H'_iG_i=M'_iL'_i\mbox{Id}_n$
and so
$P'(y')\mbox{Id}_n=\sum_{i=1}^{q '}G_i(y')H'_i(y').$

But $H'_i$ is the matrix $(\partial f'_k/\partial Y_j)_{k\in [r'],j\in [n]}$ completed with some $(n-r')$ rows of $0,\ 1$. Especially we obtain
 \begin{equation}\label{identity}(\partial f'/\partial Y)G_i=
 M'_iL'_i(\mbox{Id}_{r'}|0).\end{equation}

 Then $t_i:=H'_i(y')\epsilon\in d' A'^n$
satisfies
$$G_i(y')t_i= M'_i(y')L'_i(y')\epsilon $$
 and so
$$\sum_{i=1}^q G_i(y')t_i=P'(y')\epsilon\equiv  d's\epsilon\ \mbox{modulo} \ d^3.$$
 It follows that
 $$s(y- y')\equiv d' \sum_{i=1}^{q'}G_i(y')t_i\ \mbox{modulo}\ d^3.$$
Note that $t_{ij}=t_{i1} $ for all $i\in [r']$ and $j\in [n]$ because the first $r'$ rows of $H'_i$ does not depend on $i$ (they are the rows of $(\partial f'/\partial Y)$). 
 
 Let
 \begin{equation}\label{def of h}h=s(Y-y')-d'\sum_{i=1}^{q'}G_i(y')T_i,\end{equation}
 where  $T_i=(T_1,\ldots,T_{r'}, T_{i,r'+1}\ldots, T_{i,n})$, $i\in [q']$ are new variables. We will use also $T_{ij}=T_i$ for $i\in [r']$, $j\in [n]$ because it is convenient  sometimes. The kernel of the map
$\bar{\phi}:{\bar A}[Y,T]\to {\bar A}'$ given by $Y\to y$, $T\to t$ contains $h$ modulo $d^3$. Since
$$s(Y-y')\equiv d'\sum_{i=1}^{q '}G_i(y')T_i\ \mbox{modulo}\ h$$
and
$$f'(Y)-f'(y')\equiv \sum_j(\partial f'/\partial Y_j)(y') (Y_j-y'_j)$$
modulo higher order terms in $Y_j-y'_j$, by Taylor's formula we see that for $p'=\max_i \deg f'_i$ we have
\begin{equation}\label{def of Q}s^{p'}f'(Y)-s^{p'}f'(y')\equiv  \sum_js^{p'-1}d'(\partial f'/\partial Y_j)(y') \sum_{i=1}^{q'}G_{ij}(y')T_{ij}+d'^2 Q\end{equation}
modulo $h$ where $Q\in T^2 A[T]^{r'}$.   We have $f'(y')\equiv d'^2b'\ \mbox{modulo}\ d^3$ for some $b'\in d'A^{r'}$. Then
\begin{equation}\label{def of g}g_i=s^{p'}b'_i+s^{p'}T_i+Q_i, \qquad i\in [r'] \end{equation}  modulo $d^3$ is in the kernel of $\bar\phi$. Indeed,  we have $s^{p'}f'_i=d'^2g_i\ \mbox{modulo}\ (h,d^3)$ because of (\ref{def of Q}). Thus
$d'^2{\bar\phi}(g)=d'^2g(t)\in (h(y,t),f'(y))\in d^3A'$ and so $g(t)\in d^3A'$, because $u$ is flat and $d'$ is regular on $A/(d^3)$. Set $E={\bar A}[Y,T]/(I,g,h)$ and let  ${\bar \psi}:E\to {\bar A'}$ be the map induced by ${\bar \phi}$. Clearly, ${\bar v}$ factors through $\bar \psi$ because ${\bar v}$ is the composed map ${\bar B}= {\bar A}[Y]/I\to E\xrightarrow{{\bar \psi}}{\bar A}'$.

Now we will see that there exist $s',s''\in E$ such that $E_{ss's''}$ is smooth over $\bar A$ and $\bar \psi$ factors through $E_{ss's''}$.

Note that the $r'\times r'$-minor  $s'$ of $(\partial g/\partial T)$ given by the first  $r'$-variables $T$ is from $s^{r'p'}+(T)\subset 1+(d',T)$ because $Q\in (T)^2$. Then $V=({\bar A}[Y,T]/(h,g))_{ss'}$ is smooth over $\bar A$. As in \cite{PP} we claim that $I{\bar A}[Y,T]\subset (h,g){\bar A}[Y,T]_{ss's''}$ for some $s''\in 1+(d',d^3,T)A[Y,T]$. Indeed, we have $P'I{\bar A}[Y,T]\subset (f')A[Y,T]\subset (h,g){\bar A}[Y,T]_s$ and so $P'(y'+s^{-1}d' G(y')T)I\subset (h,g,d^3)A[Y,T]_s$. Since  $P'(y'+s^{-1}d'G(y')T)\in P'(y')+d'(T)V$ we get $P(y'+s^{-1}d'G(y')T)\equiv d's''\ \mbox{modulo}\ d^3$ for some $s''\in 1+(T)A[Y,T]$. It follows that $s''I\subset (((h,g):d'),d^3)A[Y,T]_{ss'}$.  Thus $s''I$ is contained modulo $d^3$ in $ (0:_Vd')=0$ because $d'$ is regular on $V$, the map ${\bar A}\to V$ being flat. This shows our claim. It follows that
   $I\subset (d^3,h,g)A[Y,T]_{ss's''}$. Thus $E_{ss's''}\cong V_{s''} $ is a ${\bar B}$-algebra which is also standard smooth over $\bar A$.

 As $u(s)\equiv 1$ modulo $d'$ and ${\bar\psi}(s'),{\bar \psi}(s'')\equiv 1$ modulo $(d',d^3,t)$, $d,d',t\in \mm A'$ we see that $u(s),{\bar\psi}(s'), {\bar\psi}(s'')$ are invertible because  $A'$ is local. Thus $\bar\psi$ (and so $\bar v$) factors through the standard smooth $\bar A$-algebra $E_{ss's''}$, let us say by ${\bar\omega}:E_{ss's''}\to \bar A'$.
 
Now, let $Y'=(Y'_1,\ldots.Y'_n)$, and $D$ be the $A$-algebra isomorphic with  \\
 $(A[Y,T]/(I,h,g))_{ss's''}$ by $Y'\to Y$, $T\to T$. Since $A'$ is Henselian we may lift $\bar \omega$ to a map $(A[Y,T]/(I,h,g))_{ss's''}\to A'$ which will correspond to a map $\omega:D\to A'$. Then $\bar v$ factors through ${\bar D}$, let us say ${\bar B}\to {\bar D}\to {\bar A}'$, where the first map is given by $Y\to Y'$.  Note that $v$ does not factor through $D$. 
\hfill\ \end{proof}

 Let $\delta:B\otimes_AD\cong D[Y]/ID[Y]\to A'$ be the $A$-morphism given by $b\otimes \lambda\to v(b)\omega(\lambda)$.

Claim: $\delta$ factors through a special finite type $B\otimes_AD$-algebra $\tilde E$.

The proof will follow the proof of  Lemma \ref{D}.
Note that the map $\bar B\to \bar D$ is given by $Y\to Y'+d^3D$. Thus $I(Y')\equiv 0$ modulo $d^3D$. Set $\tilde y=\omega(Y')$. Since $\bar v$ factors through 
$\bar \omega$ we get
  $y-\tilde y=v(Y)-\tilde y\in d^3A'^n$, let us say $y-\tilde y=d^2\nu$ for $\nu\in dA'^n$.

Recall  that $P=\sum_iL_i\det H_i$ for  $L_i\in ((f):I)$. We have $d\equiv P$ modulo $I$ and so $P(Y')\equiv d$ modulo $d^3$ in $D$ because $I(Y')\equiv 0$ modulo $d^3D$. Thus $P(Y')=d{\tilde s}$ for a certain ${\tilde s}\in D$ with ${\tilde s}\equiv 1$ modulo $d$.
 Let ${\tilde G}'_i$ be the adjoint matrix of $H_i$ and ${\tilde G}_i=L_i{\tilde G}'_i$. We have
$\sum_i{\tilde G}_iH_i=\sum_i H_i{\tilde G}_i=P\mbox{Id}_n$
and so
$$d{\tilde s}\mbox{Id}_n=P(Y')\mbox{Id}_n=\sum_i{\tilde G}_i(Y')H_i(Y').$$

But $H_i$ is the matrix $(\partial f_i/\partial Y_j)_{i\in [r],j\in [n]}$ completed with some $(n-r)$ rows from $0,1$. Especially we obtain
 \begin{equation}\label{identity1}(\partial f/\partial Y)\sum_i{\tilde G}_i=(P\mbox{Id}_r|0).\end{equation}

 Then ${\tilde t}_i:=\omega(H_i(Y'))\nu\in dA'^n$
satisfies
$$\sum_i{\tilde G}_i(Y'){\tilde t}_i=P(Y')\nu=d{\tilde s}\nu$$
 and so
 $${\tilde s}(y-\tilde y)=d\sum_i\omega({\tilde G}_i(Y')){\tilde t}_i.$$
 Let
 \begin{equation}\label{def of h1}{\tilde h}={\tilde s}(Y-Y')-d\sum_i{\tilde G}_i(Y'){\tilde T}_i,\end{equation}
 where  ${\tilde T}=({\tilde T}_1,\ldots,{\tilde T}_n)$ are new variables. The kernel of the map
${\tilde\phi}:D[Y,{\tilde T}]\to A'$ given by $Y\to y$, ${\tilde T}\to {\tilde t}$ contains ${\tilde h}$. Since
$${\tilde s}(Y-Y')\equiv d\sum_i{\tilde G}_i(Y'){\tilde T}_i\ \mbox{modulo}\ {\tilde h}$$
and
$$f(Y)-f(Y')\equiv \sum_j(\partial f/\partial Y_j)((Y') (Y_j-Y'_j)$$
modulo higher order terms in $Y_j-Y'_j$, by Taylor's formula we see that for $p=\max_i \deg f_i$ we have
\begin{equation}\label{def of Q1}{\tilde s}^pf(Y)-{\tilde s}^pf(Y')\equiv  \sum_j{\tilde s}^{p-1}d(\partial f/\partial Y_j)(Y')\sum_i {\tilde G}_{ij}(Y'){\tilde T}_{ij}+d^2{\tilde Q}\end{equation}
modulo $\tilde h$ where ${\tilde Q}\in {\tilde T}^2 D[{\tilde T}]^r$.   We have $f(Y')=d^2{\tilde b}$ for some ${\tilde b}\in dD^r$. Then
\begin{equation}\label{def of g1}{\tilde g}_i={\tilde s}^p{\tilde b}_i+{\tilde s}^p{\tilde T}_i+{\tilde Q}_i, \qquad i\in [r] \end{equation}  is in the kernel of $\tilde\phi$. Indeed,  we have ${\tilde s}^pf_i=d^2{\tilde g}_i\ \mbox{modulo}\ {\tilde h}$ because of (\ref{def of Q1}) and $P(Y')=d{\tilde s}$. Thus
$d^2\phi({\tilde g})=d^2{\tilde g}(t)\in ({\tilde h}(y,{\tilde t}),f(y))=(0)$ and so ${\tilde g}({\tilde t})=0$. Set ${\tilde E}=D[Y,{\tilde T}]/(I,{\tilde g},{\tilde h})$ and let  ${\tilde\psi}:{\tilde E}\to A'$ be the map induced by $\tilde \phi$. Clearly, $v$ factors through $\tilde \psi$ because $v$ is the composed map $B\to B\otimes_AD\cong D[Y]/I\to {\tilde E}\xrightarrow{{\tilde\psi}} A'$.

Finally we will prove that there exist ${\tilde s}',{\tilde s}''\in{\tilde E}$ such that ${\tilde E}_{{\tilde s}{\tilde s}'{\tilde s}''}$ is standard smooth over $A$ and $\tilde\psi$ factors through ${\tilde E}_{{\tilde s}{\tilde s}'{\tilde s}''}$.

Note that the $r\times r$-minor  ${\tilde s}'$ of $(\partial {\tilde g}/\partial {\tilde T})$ given by the first  $r$-variables ${\tilde T}$ is from ${\tilde s}^{rp}+({\tilde T})\subset 1+(d,{\tilde T})$ because ${\tilde Q}\in ({\tilde T})^2$. Then ${\tilde V}=(D[Y,{\tilde T}]/({\tilde h},{\tilde g}))_{{\tilde s}{\tilde s}'}$ is smooth over $D$. We claim that $I\subset ({\tilde h},{\tilde g})D[Y,{\tilde T}]_{{\tilde s}{\tilde s}'{\tilde s}''}$ for some other ${\tilde s}''\in 1+(d,{\tilde T})D[Y,{\tilde T}]$. Indeed, we have $PID[Y]\subset (f)D[Y]\subset ({\tilde h},{\tilde g})D[Y,{\tilde T}]_{\tilde s}$ and so $P(Y'+{\tilde s}^{-1}d\sum_i{\tilde G}_i(Y'){\tilde T}_i)I\subset ({\tilde h},{\tilde g})D[Y,{\tilde T}]_{\tilde s}$. Since  $P(Y'+{\tilde s}^{-1}d\sum_i{\tilde G}_i(Y'){\tilde T}_i)\in P(Y')+d({\tilde T})$ we get $P(Y'+{\tilde s}^{-1}d\sum_i{\tilde G}_i(Y'){\tilde T}_i)=d{\tilde s}''$ for some ${\tilde s}''\in 1+({\tilde T})D[Y,{\tilde T}]$. It follows that ${\tilde s}''I\subset (({\tilde h},{\tilde g}):d)D[Y,{\tilde T}]_{{\tilde s}{\tilde s}'}$. Thus ${\tilde s}''I\subset (0:_{\tilde V}d)=0$, which shows our claim. It follows that
   $I\subset ({\tilde h},{\tilde g})D[Y,{\tilde T}]_{{\tilde s}{\tilde s}'{\tilde s}''}$. Thus ${\tilde E}_{{\tilde s}{\tilde s}'{\tilde s}''}\cong {\tilde V}_{{\tilde s}''} $ is a $B$-algebra which is also standard smooth over $D$ and $A$.

 As $\omega({\tilde s})\equiv 1$ modulo $d$ and ${\tilde\psi}({\tilde s}'),{\tilde \psi}({\tilde s}'')\equiv 1$ modulo $(d,{\tilde t})$, $d,{\tilde t}\in \mm A'$ we see that $\omega({\tilde s}),{\tilde\psi}({\tilde s}'), {\tilde\psi}({\tilde s}'')$ are invertible because  $A'$ is local. Thus ${\tilde\psi}$ (and so $v$) factors through the standard smooth $A$-algebra $B'={\tilde E}_{{\tilde s}{\tilde s}'{\tilde s}''}$.
\hfill\ \end{proof}

\section{The Algorithm} 

We obtain the following algorithm (which will be implemented in SINGULAR as a library and available 2017).

{\bf Algorithm Neron Desingularization}

Input: $N\in \ZZ_{>0}$ a bound\\
$A:=k[x]_{( x)}/J, J=( h_1, \ldots, h_p) \subseteq k[x],  x=(x_1, \ldots, x_t), k$ , a field\\
$B:=A[Y]/I, I=( g_1, \ldots, g_l)\subseteq k[x, Y], Y=(Y_1, \ldots, Y_n)$\\
$v:B\to A'\subseteq K[[x]]/JK[[x]]$ an $A$-morphism, given by $y'=( y'_1, \ldots, y'_n)\in k[ x]^n$, approximations $\mod( x)^N$ of $v(Y)$.

Output:  A Neron desingularization of $v:B\to A'$ or the message ''the bound is too small''

1. Compute $H_{B/A}=(b_1,\ldots,b_q)_B$ and $H_{B/A}\cap A$.

2.  if $\dim A/H_{B/A}\cap A=0$ choose $\gamma,\gamma'\in H_{B/A}\cap A$, a regular sequence in $A$ and go to 6.

3.   choose $\gamma,\gamma'\in H_{B/A}(y')$, a regular sequence in $A$.

 4. Write $\gamma\equiv \sum_{i=1}^qb_i(y')y'_{i+n}$ modulo $(\gamma^t,\gamma'^{t})$,  $\gamma'\equiv \sum_{i=1}^qb_i(y')y'_{i+n+q}$ modulo $(\gamma^t,\gamma'^{t})$  for some $t$ and  $y_j\in k[x]$.

 5. $g_{l+1}:=-\gamma+\sum_{i=1}^qb_iY_{i+n}$, $g_{l+2}:=-\gamma'+\sum_{i=1}^qb_iY_{i+n+q}$,
$Y=(Y_1,\ldots,Y_{n+2q})$, $y'=(y'_1,\ldots,y'_{n+2q})$, $I=(g_1,\ldots,g_{l+2})$; $l:=l+2$; $n:=n+2q$; $B=A[Y]/I$.

6. $B:=S_B(I/I^2)$, $v$ trivially extended.
Write $B:=A[Y]/I$, $n:=|Y|$, $Y:=Y,Z$, $Z=(Z_1,\ldots,Z_n)$, $I:=(I,Z)$, $B:=A[Y]/I$, $v$ trivially extended.

7. Compute $f=(f_1,\ldots,f_r)$, and $f'=(f'_1,\ldots,f'_{r'})$ such that
a power $d$ of $\gamma$, resp. $d'$ of $\gamma'$ is in $((f):I)\Delta_f$, 
resp. in $((f'):I)\Delta_f'$. 

if $(d^3,d'^3)\not \supseteq (x)^N$ return to "the bound is too small". 

8. Choose $r$-minors $M_i$  (resp. $r'$-minors $M'_i$) of $(\partial f/\partial Y)$, (resp. $(\partial f'/\partial Y)$) and $L_i\in ((f):I)$, (resp. $L'_i\in ((f'):I)$) such that for $P=\sum_i M_iL_i$ (resp. $P'=\sum_i M'_iL'_i$), $d\equiv P$ modulo $I$ (resp. $d'\equiv P'$ modulo $I$).

9. Complete the Jacobian matrix $(\partial f/\partial Y)$  (resp. $(\partial f'/\partial Y)$) by $(n-r)$ (resp. $(n-r'$) rows of $0,1$  
to obtain square matrices $H_i$ (resp. $H'_i$) such that $\det H_i=M_i$ (resp. $\det H'_i=M'_i$).

10. Write $P'(y')=d's$ modulo $d^3$ for $s\in A$, $s\equiv 1$ modulo $d'$.

11. For $i=1$ to $q'$ compute $G'_i$ the adjoint matrix of $H'_i$ and $G_i=L_iG'_i$.

12. $h:=s(Y-y')-d'\sum_iG_i(y')T_i$, $T_i=(T_1,\ldots, T_{r'},T_{i,r'+1},\ldots,T_{i,n})$.

13. $p':=\max_i\{\deg f'_i\}$
write 
$s^{p'}f'(Y)-s^{p'}f'(y')=\sum_j s^{p'-1}d'\partial f'/\partial Y(y')\sum_iG_{ij}(y')T_{ij}+d'^2Q$ modulo $h$ and $f'(y')=d'^2b'$ modulo $d^3$.
For $i=1$ to $r'$ $g_i:=s^{p'} b'_i+s^{p'}T_i+Q_i$.
14. Compute $s'$ the $r'$-minor of $(\partial g/\partial T)$ given by the first $r'$ variables and $s''$ such that 
$P(y'+s^{-1}d'\sum_i G_i(y')T)=d's''$ modulo $d^3$.

15. $D:=(A[Y',T]/(I,g,h))_{ss's''}$, $Y'=(Y'_1,\ldots,Y'_n)$, $g:=g(Y')$,
$I:=I(Y')$, $h:=h(Y')$.

Write $P(Y')=d\tilde s$, $\tilde s\equiv 1 $ modulo $d$.

16. Compute $\tilde G'_i$ the adjoint matrix of $H_i$ and $\tilde G_i=L_i\tilde G'_i$.

17. $\tilde h:={\tilde s}(Y-Y')-d\sum_{i=1}^q{\tilde G}_i{\tilde T}_i$, ${\tilde T}_i=({\tilde T}_1,\ldots,{\tilde T}_r,{\tilde T}_{i,r+1},\ldots,{\tilde T}_{i,n})$.

18.  $p:=\max_i\{\deg f_i\}$

 Write 

${\tilde s}^{p}f(Y)-{\tilde s}^{p}f(Y')=\sum_j {\tilde s}^{p-1}d\partial f/\partial Y(Y')\sum_i{\tilde G}_{ij}(Y'){\tilde T}_{ij}+d'^2{\tilde Q}$ modulo $\tilde h$ and $f(Y')=d^2{\tilde b}$,  ${\tilde b}\in dD^r$.

19. For $i=1$ to $r$,  ${\tilde g}_i:={\tilde s}^p{\tilde b}_i+\tilde s^p{\tilde T}_i+{\tilde Q}_i$.

20. Compute ${\tilde s}'$ the $r\times r$-minors of $(\partial {\tilde g}/\partial {\tilde T})$ given by the first $r$ variables of $\tilde T$.
Compute ${\tilde s}''$ such that 
$P(Y'+{\tilde s}^{-1}d\sum_i {\tilde G}_i(Y'){\tilde T})=d{\tilde s}''$.

21. return $D[Y,{\tilde T}/(I, {\tilde g},{\tilde h})_{{\tilde s}{\tilde s}'{\tilde s}''}$.

\vskip 0.5 cm

\end{document}